\newcommand{\bfg}[1]{\mbox{\boldmath $#1$\unboldmath}}
\newcommand{\fraca}[2]{\displaystyle\frac{#1}{#2}}
\newcommand{\mm}[3]{\renewcommand{\arraystretch}{0.8}\begin{array}[t]{c}\mbox{#1}
\\ #2\end{array}\begin{array}[t]{c}#3\end{array}
\renewcommand{\arraystretch}{1}}
\def \R {{\rm I\kern -2.2pt R\hskip 1pt}}
\newtheorem{algorithm1}{Algorithm}[section]
\newtheorem{theorem}{Theorem}[section]
\newtheorem{proposition}{Proposition}[section]
\newtheorem{proof}{Proof}[section]
\def\boxforqed{\rule{0.5em}{1.5ex}}
\def\qed{\ifmmode\squareforqed\else{\unskip\nobreak\hfil
        \penalty50\hskip1em\null\nobreak\hfil\boxforqed
         \parfillskip=0pt\finalhyphendemerits=0\endgraf}\fi}
\begin{document}

\begin{frontmatter}



\title{On the convergence of cutting-plane methods for robust optimization with ellipsoidal uncertainty sets}


\author[label1]{R.~M\'{\i}nguez\corref{cor1}}
\address[label1]{Dr. Engr., Independent Consultant, Ciudad Real, Spain}
\author[label2]{V.~Casero-Alonso}
\address[label2]{Department of Mathematics, Institute of Mathematics Applied to Science and Engineering, University of Castilla-La Mancha,
Ciudad Real, Spain}
\cortext[cor1]{Corresponding author: rominsol@gmail.com}

\begin{abstract}
Recent advances in cutting-plane strategies applied to robust optimization problems show that they are competitive with respect to problem reformulations and interior-point algorithms. However, although its application with polyhedral uncertainty sets guarantees convergence, finite termination when using ellipsoidal uncertainty sets is not theoretically guaranteed. This paper demonstrates that the cutting-plane algorithm set out for ellipsoidal uncertainty sets in its more general form also converges in a finite number of steps.
%
\end{abstract}

\begin{keyword}
Conic programming
and interior point methods \sep Cutting-plane methods \sep Decision analysis under uncertainty \sep Robust optimization


\end{keyword}
\end{frontmatter}

\section{Introduction}
%
%
The concept of robust optimization (RO) 
\cite{Soyster:73}--\cite{BertsimasS:04}
was developed to drop the classical assumption in mathematical programming that the input data is precisely known and equal to given nominal values.
Robust optimization techniques design solutions that are immune to data uncertainty
by solving equivalent deterministic problems. The main advantage of these techniques is that it is not required to know the probability density function of the uncertain data because the decision-maker searches for the optimal solution in view of the worst realization of uncertain data within the uncertainty set.

The most common uncertainty sets used in the technical literature are polyhedral and ellipsoidal. Robust problems with those uncertainty sets are usually solved using two different approaches: reformulation to a deterministic problem \citep{Ben-TalN:99}, also known as robust counterpart (RC), or iterative linear cutting-plane methods
\cite{FischettiM:12}--\cite{BertsimasDL:16}.
%
Recent advances in cutting-plane strategies show that, in terms of computational performance, there is no clear dominant method \cite{BertsimasDL:16}, which makes this type of approach competitive with respect to the reformulation method and interior-point algorithms, and it constitutes, indeed, a plausible alternative for practitioners and researchers. However, although its application with polyhedral uncertainty sets guarantees convergence, finite termination when using ellipsoidal uncertainty sets is not theoretically guaranteed, which might discourage its implementation in practice.

Kelley (1960) \cite{Kelley:60} was pioneer in developing a cutting-plane method for solving convex programs. He developed an algorithm that involves solving an infinite sequence of linear programs, which in practice is truncated after a finite number of steps to obtain an approximate solution within an specified tolerance. In addition, \cite{Kelley:60} sets out the fundamental convergence theorem which ensures convergence in a finite number of steps for continuous variables. Westerlund and Pettersson (1995) \cite{WesterlundP:95} extended the Kelley's cutting-plane method for solving convex mixed-integer nonlinear programming problems with a moderate degree of nonlinearity. Based on Kelley's convergence theorem, \cite{WesterlundP:95} demonstrates that the sequence of solutions reached by the cutting-plane method converges to the global optimal solution in a finite number of steps, even with the consideration of continuous and binary variables.

The aim of this paper is to demonstrate that the cutting-plane method for getting robust solutions for uncertain linear programs with ellipsoidal uncertainty sets is equivalent to the extended Kelley's cutting plane method proposed by \cite{WesterlundP:95}, which theoretically guarantees its convergence in a finite number of steps.

The rest of the paper is organized as follows. Section~\ref{s1} introduces the cutting-plane method for mixed-integer  robust optimization. In Section~\ref{s2} a brief description of the problem and cutting-plane scheme considered by \cite{WesterlundP:95} is given, in addition, the equivalency among formulations and cutting-plane strategies is demonstrated. Finally, in Section~\ref{Conclu} relevant conclusions are drawn.

\section{Cutting-plane method for mixed-integer robust optimization}\label{s1}
Consider the following general mixed-integer RO problem:
\begin{eqnarray}
\mm{Minimize}{{\bfg x}}{{\bfg c}^T{\bfg x}},&&\label{eq.fobj}\\
 \mbox{subject to \hspace*{0.5cm}}\tilde{\bfg a}_i^T {\bfg x} &\le & {\bf 0};\forall  \tilde{\bfg a}_i\in U_i, i=1,\ldots,m,\label{eq.incons}\\
{\bfg l}\le {\bfg x} & \le &{\bfg u},\label{eq.bounds}\\
x_j &\in &{\cal Z};\;\forall j\le k;\;1\le k\le n,\label{eq.integrality}
\end{eqnarray}
where ${\bfg x}\in {\cal R}^n$ is the decision variable vector of $n$ variables, the first $k$ of which are integral. ${\bfg c}(n \times 1)$ are data coefficients, ${\bfg l}(n \times 1)$ and ${\bfg u}(n \times 1)$ are lower and upper decision variable bounds, and $\tilde{\bfg a}_i(n \times 1)$ are uncertain coefficients, where the tilde indicates its uncertain character. $U_i$ is the ellipsoidal uncertainty set defining the possible values of uncertain parameters $\tilde{\bfg a}_i$, being ${\bfg a}_i(n \times 1)$ the vector of nominal values around which uncertain parameters may vary. For those cases where vector ${\bfg c}(n \times 1)$ is uncertain, or even the right hand side of equation~(\ref{eq.incons}) is uncertain and equal to ${\bfg b}(m \times 1)$, it is possible to rewrite the original problem as (\ref{eq.fobj})-(\ref{eq.integrality}) (see \cite{Ben-TalEN:09}).

The RC of problem (\ref{eq.fobj})-(\ref{eq.integrality}) replaces constraint set (\ref{eq.incons}) by:
\begin{equation}\label{eq.inconsRCellip}
   \left(\mm{Maximum}{\tilde{\bfg a}_{i}\in U_i}{\tilde{\bfg a}_i^T {\bfg x}}\right) \le  0,\; i=1,\ldots,m.
\end{equation}

Thus, the RC of problem (\ref{eq.fobj})-(\ref{eq.integrality}) using ellipsoidal uncertainty sets becomes:
\begin{eqnarray}
&&\mm{Minimize}{{\bfg x}}{{\bfg c}^T{\bfg x}},\label{eq.fobjF}\\
&&\mbox{\hspace*{0.2cm}subject to      }  {\bfg a}_i^T{\bfg x}+\beta_i\sqrt{{\bfg x}^T{\bfg \Sigma}_i{\bfg x}}  \le   0; i=1,\ldots,m,\label{eq.inconsF}\\
&& \hspace*{2.2cm} {\bfg l}\le {\bfg x}  \le  {\bfg u},\label{eq.boundsF}\\
&&  \hspace*{2.2cm} x_j  \in  {\cal Z};\; \forall j\le k;\;1\le k\le n,\label{eq.integralityF}
\end{eqnarray}
where positive parameters $\beta_i$ and the positive-definite variance-covariance matrices ${\bfg \Sigma}_i$, associated with uncertain parameters $\tilde{\bfg a}_i$, control the size and protection level of the ellipsoidal set for each constraint.

The cutting-plane method is a decomposition technique, such as Benders' decomposition \citep{Floudas:95,ConejoCMG:06}, where instead of dealing with the full mixed-integer second-order cone programming problem (\ref{eq.fobjF})-(\ref{eq.integralityF}) at once, it is decomposed into a linear master problem and different quadratically constrained subproblems with analytical solution related to constraint (\ref{eq.inconsRCellip}). The optimal solution is achieved by solving these master and subproblems in an iterative fashion as described in the following algorithm.

\begin{algorithm1}[Basic cutting-plane algorithm]\label{alg}
$\;$
\begin{enumerate}
\item[Step 0] \textbf{Initialization:}
    Set the tolerance of the process $\varepsilon$, the initial value of the iteration counter $l=0$ and the initial values of the uncertain parameters to their nominal values $\hat{\bfg a}_i^{(0)}= {\bfg a}_i;\forall i$.
\item[Step 1] \textbf{Solving the master problem:} Update the iteration counter $l \longrightarrow l+1$ and calculate the optimal solution ${\bfg x}_l$ of the following master problem:
\begin{eqnarray}
    \mm{minimize}{{\bfg x}}{{\bfg c}^T{\bfg x}},\label{eq.fobjF2.Step1}\\
 \mbox{subject to \hspace*{0.1cm} }\hat {\bfg a}_i^{(\nu)^T}{\bfg x} & \le & 0;\;i=1,\ldots,m;\;\nu=0,1,\ldots,l-1, \label{eq.inconsF2.Step1}\\
  {\bfg l}\le {\bfg x} & \le & {\bfg u},\label{eq.boundsF2.Step1}\\
  x_j & \in & {\cal Z};\; \forall j\le k;\;1\le k\le n.\label{eq.integralityF2.Step1}
\end{eqnarray}
Continue in {\bf Step 2}.
\item[Step 2] \textbf{Stopping rule:} Check if the current solution $\bfg{x}_l$ satisfies the original conic restrictions (\ref{eq.inconsF}). If it does, the optimal solution has been found. If it does not and $\hat {\bfg a}_i^{(\nu)^T}{\bfg x}_l<\varepsilon;\forall i$, stop the process with ${\bfg x}_l$ as optimal solution, otherwise continue to {\bf Step 3}.
\item[Step 3] \textbf{Solving subproblems:} Plug solution ${\bfg x}_l$ obtained in {\bf Step 1} into the following expression:
 \begin{equation}\label{eq.ahati}
    \hat {\bfg a}^{(l)}_i={\bfg a}_i+\beta_i\fraca{{\bfg \Sigma}_i{\bfg x}_l}{\sqrt{{\bfg x}_l^T{\bfg \Sigma}_i{\bfg x}_l}};\;i=1,\ldots,m,
\end{equation}
which corresponds to the analytical solution of subproblems (\ref{eq.inconsRCellip}) \cite[Lemma 1]{Ackooij:17}, and continue in {\bf Step 1}.
\end{enumerate}
\end{algorithm1}

To our knowledge, convergence of this algorithm in a finite number of steps is not theoretically guaranteed, which might have discouraged its practical implementation to date. Nevertheless, there are many variants of this algorithm proposed in the technical literature \cite{BertsimasDL:16} that makes this method computationally competitive and practical in many cases.

\section{On the convergence of cutting plane algorithm for RO with ellipsoidal uncertainty sets}\label{s2}
Westerlund and Pettersson (1995) \cite{WesterlundP:95} proposed an iterative algorithm to solve the following problem:
\begin{eqnarray}
&&\mm{Minimize}{{\bfg x}}{{\bfg c}^T{\bfg x}},\label{eq.fobjFWP95}\\
&&\mbox{\hspace*{0.2cm}subject to      }  {\bfg g}({\bfg x})  \le   {\bf 0},\label{eq.inconsFFWP95}\\
&&  \hspace*{2.2cm} x_j  \in  {\cal Z};\; \forall j\le k;\;1\le k\le n,\label{eq.integralityFFWP95}
\end{eqnarray}
where the vector of variables ${\bfg x}=({\bfg x}^{\rm d}; {\bfg x}^{\rm c})$ includes discrete (${\bfg x}^{\rm d}\in{\cal Z}^k$) and continuous (${\bfg x}^{\rm c}\in{\cal R}^{n-k}$) variables simultaneously, ${\bfg g}({\bfg x})$ are $m$ continuous convex functions, all defined on a set ${\cal L}=X^{\rm d}\cup X^{\rm c}$, where $X^{\rm d}$ is a finite discrete set defined by
$X^{\rm d}=\left\{{\bfg B}{\bfg x}^{\rm d}\le {\bfg b}\right\}$, and $X^{\rm c}$ is a $(n-k)$-dimensional compact polyhedral convex set defined by
$X^{\rm c}=\left\{{\bfg A}{\bfg x}^{\rm c}\le {\bfg e}\right\}$.

The algorithm set out by Westerlund and Pettersson (1995) \cite{WesterlundP:95} establishes a sequence of points ${\bfg x}_\nu; \nu=0,1,\ldots,l-1$, replacing original constraints (\ref{eq.inconsFFWP95}) by:
\begin{equation}\label{cuttingKelley}
 g_i({\bfg x}_\nu)+
 \left(\fraca{\partial g_i}{\partial {\bfg x}}\right)^T_{{\bfg x}_\nu}({\bfg x}-{\bfg x}_\nu)\le 0;\;i=1,\ldots,m;\;\nu=0,1,\ldots,l-1.
\end{equation}

Since ${\bfg g}({\bfg x})$ are $m$ continuous convex functions, the following condition holds:
\begin{equation}\label{convexityKelley}
 g_i({\bfg x}_\nu)+
 \left(\fraca{\partial g_i}{\partial {\bfg x}}\right)^T_{{\bfg x}_\nu}({\bfg x}-{\bfg x}_\nu) \le g_i({\bfg x});\;i=1,\ldots,m;\;\nu=0,1,\ldots,l-1.
\end{equation}

Convexity, and therefore condition (\ref{convexityKelley}), is a requirement for the convergence of the method. Based on problem definition (\ref{eq.fobjFWP95})-(\ref{eq.integralityFFWP95}) the following proposition can be set out:
\begin{proposition}\label{prop1}
The RC (\ref{eq.fobjF})-(\ref{eq.integralityF}) responds to problem description (\ref{eq.fobjFWP95})-(\ref{eq.integralityFFWP95}) given by \cite{WesterlundP:95}.
\end{proposition}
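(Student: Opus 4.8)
The plan is to match the robust counterpart (\ref{eq.fobjF})--(\ref{eq.integralityF}) clause by clause against the template (\ref{eq.fobjFWP95})--(\ref{eq.integralityFFWP95}). The two objectives coincide at once, both being the linear form ${\bfg c}^T{\bfg x}$, so the work reduces to the constraints and the domain. First I would define, for each $i=1,\ldots,m$,
\[
g_i({\bfg x})={\bfg a}_i^T{\bfg x}+\beta_i\sqrt{{\bfg x}^T{\bfg \Sigma}_i{\bfg x}},
\]
so that the conic constraints (\ref{eq.inconsF}) are written compactly as ${\bfg g}({\bfg x})\le{\bf 0}$, exactly in the shape of (\ref{eq.inconsFFWP95}).

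The key step is to verify that each $g_i$ is a continuous convex function, since convexity is precisely the hypothesis on which (\ref{convexityKelley}), and hence the convergence argument of \cite{WesterlundP:95}, rely. The affine part ${\bfg a}_i^T{\bfg x}$ is convex. For the remaining term, because ${\bfg \Sigma}_i$ is positive definite it admits a symmetric positive-definite square root ${\bfg \Sigma}_i^{1/2}$, so that $\sqrt{{\bfg x}^T{\bfg \Sigma}_i{\bfg x}}=\|{\bfg \Sigma}_i^{1/2}{\bfg x}\|_2$ is the Euclidean norm of a linear image of ${\bfg x}$; since a norm is convex and composition with an affine map preserves convexity, and the factor $\beta_i>0$ merely scales a convex function, this term is convex. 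Each $g_i$ is then a sum of two convex functions and therefore convex, while continuity is immediate, being the sum of a linear map and the square root of a nonnegative continuous quadratic form.

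It remains to recast the domain. I would split the decision vector as ${\bfg x}=({\bfg x}^{\rm d};{\bfg x}^{\rm c})$ following the integrality pattern (\ref{eq.integralityF}), which already matches (\ref{eq.integralityFFWP95}). The box (\ref{eq.boundsF}) is rewritten in two-sided form by stacking identity blocks, i.e.\ $l_j\le x_j\le u_j$ becomes $x_j\le u_j$ and $-x_j\le -l_j$; collecting the first $k$ rows yields a finite discrete set $X^{\rm d}=\{{\bfg B}{\bfg x}^{\rm d}\le{\bfg b}\}$ (finite because the integer variables range over a bounded box), and collecting the remaining $n-k$ rows yields $X^{\rm c}=\{{\bfg A}{\bfg x}^{\rm c}\le{\bfg e}\}$, a closed and bounded, hence compact, polyhedral convex set of dimension $n-k$. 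With ${\cal L}=X^{\rm d}\cup X^{\rm c}$ the two problems then agree term by term.

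I do not anticipate a genuine obstacle; the only point requiring care is the convexity argument, and there the subtlety worth flagging is that $g_i$ fails to be differentiable at ${\bfg x}={\bf 0}$, where $\sqrt{{\bfg x}^T{\bfg \Sigma}_i{\bfg x}}$ has a kink, so that the gradient in (\ref{cuttingKelley}) and the closed form (\ref{eq.ahati}) are defined only for ${\bfg x}\ne{\bf 0}$. Since the origin trivially satisfies (\ref{eq.inconsF}) and the cuts are generated at the infeasible iterates of Algorithm~\ref{alg}, this does not affect the identification of the two formulations, but I would state it explicitly to keep the correspondence clean.
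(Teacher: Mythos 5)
Your proof is correct, and it reaches the conclusion by a genuinely different route on the one substantive point. The paper does not argue convexity abstractly: it verifies the first-order inequality (\ref{convexityKelley}) directly, by computing the gradient of $g_i$ at ${\bfg x}_k$, expanding $g_i({\bfg x}_k)+\bigl(\partial g_i/\partial{\bfg x}\bigr)^T_{{\bfg x}_k}({\bfg x}-{\bfg x}_k)$ into ${\bfg a}_i^T{\bfg x}+\beta_i\,{\bfg x}_k^T{\bfg \Sigma}_i{\bfg x}/\sqrt{{\bfg x}_k^T{\bfg \Sigma}_i{\bfg x}_k}$, and bounding this by $g_i({\bfg x})$ via the Cauchy--Schwarz inequality ${\bfg x}_k^T{\bfg \Sigma}_i{\bfg x}\le\sqrt{{\bfg x}_k^T{\bfg \Sigma}_i{\bfg x}_k}\sqrt{{\bfg x}^T{\bfg \Sigma}_i{\bfg x}}$. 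That computation has the side benefit of producing the explicit cut ${\bfg a}_i^T{\bfg x}+\beta_i\,{\bfg x}_k^T{\bfg \Sigma}_i{\bfg x}/\sqrt{{\bfg x}_k^T{\bfg \Sigma}_i{\bfg x}_k}$, which is exactly what Proposition~\ref{prop2} then identifies with $\hat{\bfg a}_i^{(\nu)^T}{\bfg x}$. Your argument instead establishes convexity once and for all by writing $\sqrt{{\bfg x}^T{\bfg \Sigma}_i{\bfg x}}=\|{\bfg \Sigma}_i^{1/2}{\bfg x}\|_2$, a norm composed with a linear map; since the first-order inequality at any point of differentiability is an automatic consequence of convexity, this suffices for (\ref{convexityKelley}) without any explicit computation. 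What your route buys is robustness: it covers ${\bfg x}_k={\bf 0}$, where the paper's formula divides by $\sqrt{{\bfg x}_k^T{\bfg \Sigma}_i{\bfg x}_k}=0$ and the gradient does not exist --- a point the paper silently excludes and you rightly flag. What the paper's route buys is the closed-form linearization needed downstream. You also treat the domain decomposition into $X^{\rm d}$ and $X^{\rm c}$ more carefully than the paper, which disposes of it in one sentence. No gap in either direction.
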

\begin{proof}
Decision variables ${\bfg x}=({\bfg x}^{\rm d}; {\bfg x}^{\rm c}) (n\times 1)$ in (\ref{eq.fobjF})-(\ref{eq.integralityF}) are defined on a set ${\cal L}=X^{\rm d}\cup X^{\rm c}$ through lower and upper decision variable bounds (\ref{eq.bounds}). In addition, constraints in (\ref{eq.inconsF}) are convex. For any given ${\bfg x}_k\in {\cal L}$, condition (\ref{convexityKelley}) holds:
\begin{equation}\label{proof1}
  \begin{array}{l}
     g_i({\bfg x}_k)+
 \left(\fraca{\partial g_i}{\partial {\bfg x}}\right)^T_{{\bfg x}_k}({\bfg x}-{\bfg x}_k)=\\
  {\bfg a}_i^T{\bfg x}_k+\beta_i\sqrt{{\bfg x}_k^T{\bfg \Sigma}_i{\bfg x}_k} +\left({\bfg a}_i+\beta_i\fraca{{\bfg \Sigma}_i{\bfg x}_k}{\sqrt{{\bfg x}_k^T{\bfg \Sigma}_i{\bfg x}_k}}\right)^T({\bfg x}-{\bfg x}_k)=
  \\
{\bfg a}^T_i{\bfg x}+\beta_i\fraca{{\bfg x}_k^T{\bfg \Sigma}_i{\bfg x}}{\sqrt{{\bfg x}_k^T{\bfg \Sigma}_i{\bfg x}_k}}\le   {\bfg a}_i^T{\bfg x}+\beta_i\sqrt{{\bfg x}^T{\bfg \Sigma}_i{\bfg x}} = g_i({\bfg x})
   \end{array}
\end{equation}
 $\forall i=1,\ldots,m$ because ${\bfg x}_k^T{\bfg \Sigma}_i{\bfg x}\le
 \sqrt{{\bfg x}_k^T{\bfg \Sigma}_i{\bfg x}_k}
 \sqrt{{\bfg x}^T{\bfg \Sigma}_i{\bfg x}}
 $ due to Cauchy-Schwarz inequality. \qed

\end{proof}

Finally, the equivalency among cutting-planes (\ref{eq.inconsF2.Step1}) and (\ref{cuttingKelley}) is set out:
\begin{proposition}\label{prop2}
Cutting-planes (\ref{eq.inconsF2.Step1}) associated with robust problem solution are equivalent to those established by \cite{WesterlundP:95}, i.e. (\ref{cuttingKelley}).
\end{proposition}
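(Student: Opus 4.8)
The plan is to establish the equivalence by computing the Westerlund--Pettersson linearization (\ref{cuttingKelley}) explicitly for the conic functions $g_i({\bfg x})={\bfg a}_i^T{\bfg x}+\beta_i\sqrt{{\bfg x}^T{\bfg \Sigma}_i{\bfg x}}$ and showing that it collapses, term by term, to the single linear cut $\hat{\bfg a}_i^{(\nu)^T}{\bfg x}\le 0$ used in the master problem (\ref{eq.inconsF2.Step1}). Most of the algebra is already available from the proof of Proposition~\ref{prop1}: the intermediate expression in (\ref{proof1}) shows that $g_i({\bfg x}_\nu)+\left(\fraca{\partial g_i}{\partial{\bfg x}}\right)^T_{{\bfg x}_\nu}({\bfg x}-{\bfg x}_\nu)$ evaluates to ${\bfg a}_i^T{\bfg x}+\beta_i\,\fraca{{\bfg x}_\nu^T{\bfg \Sigma}_i{\bfg x}}{\sqrt{{\bfg x}_\nu^T{\bfg \Sigma}_i{\bfg x}_\nu}}$. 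The remaining task is to recognize this as a single inner product and match its coefficient vector with $\hat{\bfg a}_i^{(\nu)}$.

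The key steps, in order, are: (i) compute the gradient $\nabla g_i({\bfg x}_\nu)={\bfg a}_i+\beta_i\,\fraca{{\bfg \Sigma}_i{\bfg x}_\nu}{\sqrt{{\bfg x}_\nu^T{\bfg \Sigma}_i{\bfg x}_\nu}}$, which by (\ref{eq.ahati}) is precisely $\hat{\bfg a}_i^{(\nu)}$; (ii) substitute this gradient into the Kelley cut (\ref{cuttingKelley}) and expand the first-order term; (iii) observe that the constant contribution $g_i({\bfg x}_\nu)-\nabla g_i({\bfg x}_\nu)^T{\bfg x}_\nu$ vanishes, so that the cut reduces to $\nabla g_i({\bfg x}_\nu)^T{\bfg x}=\hat{\bfg a}_i^{(\nu)^T}{\bfg x}$. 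Using the symmetry of ${\bfg \Sigma}_i$ to write ${\bfg x}_\nu^T{\bfg \Sigma}_i{\bfg x}=({\bfg \Sigma}_i{\bfg x}_\nu)^T{\bfg x}$ makes the coefficient identification immediate, and the two cutting planes then coincide.

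The one point that deserves care --- and the conceptual heart of the argument --- is the cancellation of the intercept in step (iii). Each $g_i$ is positively homogeneous of degree one, since both ${\bfg a}_i^T{\bfg x}$ and $\sqrt{{\bfg x}^T{\bfg \Sigma}_i{\bfg x}}$ scale linearly under ${\bfg x}\mapsto t{\bfg x}$ for $t>0$. Euler's relation for homogeneous functions then gives $\nabla g_i({\bfg x}_\nu)^T{\bfg x}_\nu=g_i({\bfg x}_\nu)$, which is exactly the identity that forces the constant term $g_i({\bfg x}_\nu)-\nabla g_i({\bfg x}_\nu)^T{\bfg x}_\nu$ to zero. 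Concretely, the canceling pieces are $\beta_i\sqrt{{\bfg x}_\nu^T{\bfg \Sigma}_i{\bfg x}_\nu}$ from $g_i({\bfg x}_\nu)$ against $-\beta_i\,\fraca{{\bfg x}_\nu^T{\bfg \Sigma}_i{\bfg x}_\nu}{\sqrt{{\bfg x}_\nu^T{\bfg \Sigma}_i{\bfg x}_\nu}}$ from the gradient term, together with the obvious cancellation of the $\pm{\bfg a}_i^T{\bfg x}_\nu$ linear pieces. Once this homogeneity-driven simplification is in place, no tolerance estimate or inequality is needed: the equivalence is an exact algebraic identity rather than an approximation, so the finite-termination guarantee of \cite{WesterlundP:95} transfers directly to Algorithm~\ref{alg}.
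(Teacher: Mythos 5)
Your proposal is correct and follows essentially the same route as the paper: both compute the Westerlund--Pettersson linearization of $g_i({\bfg x})={\bfg a}_i^T{\bfg x}+\beta_i\sqrt{{\bfg x}^T{\bfg \Sigma}_i{\bfg x}}$ at ${\bfg x}_\nu$ explicitly and identify the resulting coefficient vector with $\hat{\bfg a}_i^{(\nu)}$ from (\ref{eq.ahati}). Your added observation that the intercept vanishes because $g_i$ is positively homogeneous of degree one (Euler's relation) is a nice conceptual gloss on the cancellation the paper carries out directly, but it does not change the argument.
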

\begin{proof}
For any given $i=1,\ldots,m$ and $\nu=0,\ldots,l-1$:
\begin{equation}\label{proof2}
     g_i({\bfg x}_\nu)+
 \left(\fraca{\partial g_i}{\partial {\bfg x}}\right)^T_{{\bfg x}_\nu}({\bfg x}-{\bfg x}_\nu)={\bfg a}^T_i{\bfg x}+\beta_i\fraca{{\bfg x}_\nu^T{\bfg \Sigma}_i{\bfg x}}{\sqrt{{\bfg x}_\nu^T{\bfg \Sigma}_i{\bfg x}_\nu}}
\end{equation}
which considering (\ref{eq.ahati}) are equal to $\hat {\bfg a}_i^{(\nu)^T}{\bfg x}$. \qed

\end{proof}

Propositions~\ref{prop1} and \ref{prop2} results in the following theorem:
\begin{theorem}
For any given RO problem using ellipsoidal uncertainty sets, the sequence of solution points ${\bfg x}_\nu$ generated by the cutting-plane algorithm~\ref{alg} converges to a solution of RC problem (\ref{eq.fobjF})-(\ref{eq.integralityF}) in a finite number of steps within tolerance $\epsilon$.
\end{theorem}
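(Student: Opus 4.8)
The plan is to prove the statement by reduction: I would show that Algorithm~\ref{alg}, applied to the RC problem (\ref{eq.fobjF})--(\ref{eq.integralityF}), is precisely an instance of the extended Kelley cutting-plane method of \cite{WesterlundP:95}, and then invoke their finite-convergence theorem. The two ingredients required for this reduction are exactly Propositions~\ref{prop1} and~\ref{prop2}, so the proof amounts to combining them and checking that the hypotheses of the WP95 convergence result are genuinely met by the RC.

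First, I would use Proposition~\ref{prop1} to place (\ref{eq.fobjF})--(\ref{eq.integralityF}) inside the problem class (\ref{eq.fobjFWP95})--(\ref{eq.integralityFFWP95}). Concretely, I would set $g_i({\bfg x})={\bfg a}_i^T{\bfg x}+\beta_i\sqrt{{\bfg x}^T{\bfg \Sigma}_i{\bfg x}}$ and verify the three structural requirements of \cite{WesterlundP:95}: a linear objective (immediate from ${\bfg c}^T{\bfg x}$); convexity of each $g_i$ (which follows from positive-definiteness of ${\bfg \Sigma}_i$, since $\sqrt{{\bfg x}^T{\bfg \Sigma}_i{\bfg x}}$ is a norm and hence convex); and the decomposition of the feasible set into a finite discrete set $X^{\rm d}$ and a compact polyhedral continuous set $X^{\rm c}$, both furnished by the bound constraints (\ref{eq.bounds}), with compactness of $X^{\rm c}$ guaranteed by the finite bounds ${\bfg l}\le{\bfg x}\le{\bfg u}$. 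This secures the supporting-hyperplane inequality (\ref{convexityKelley}), i.e. that each generated cut is a valid outer approximation of the feasible region.

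Second, I would invoke Proposition~\ref{prop2} to identify the cuts. The key point is that the linear constraint $\hat{\bfg a}_i^{(\nu)^T}{\bfg x}\le 0$ appended in Step~1 of Algorithm~\ref{alg}, with $\hat{\bfg a}_i^{(\nu)}$ given by the analytical subproblem solution (\ref{eq.ahati}), coincides exactly with the first-order Kelley linearization (\ref{cuttingKelley}) of $g_i$ at ${\bfg x}_\nu$. Since both algorithms minimize the same linear objective over the same variable set and append identical cuts at identical iterates, they produce the same sequence $\{{\bfg x}_\nu\}$; moreover, the stopping test in Step~2 is precisely the tolerance criterion of \cite{WesterlundP:95}. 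Hence Algorithm~\ref{alg} \emph{is} the WP95 method applied to the RC, and finite termination within tolerance $\epsilon$ follows directly from their convergence theorem, which itself rests on Kelley's fundamental result \cite{Kelley:60}.

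The main obstacle I anticipate is not the reduction itself but the careful verification of the hypotheses under which the WP95/Kelley theorem applies, together with one genuine technical subtlety: the gradient formula (\ref{eq.ahati}) is undefined at ${\bfg x}_\nu={\bf 0}$, where $\sqrt{{\bfg x}_\nu^T{\bfg \Sigma}_i{\bfg x}_\nu}=0$ and $g_i$ is not differentiable. Here I would observe that this degenerate iterate is harmless: since $g_i({\bf 0})=0\le 0$ for every $i$, the origin satisfies the original conic constraints (\ref{eq.inconsF}), so Step~2 terminates before any cut is requested and the division in (\ref{eq.ahati}) is never actually performed. With this case dispatched, every cut appended along the run is a genuine supporting hyperplane, the outer-approximation property (\ref{convexityKelley}) holds throughout, and the finite-convergence argument carries over unchanged.
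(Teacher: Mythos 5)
Your proposal is correct and takes essentially the same route as the paper: combine Propositions~\ref{prop1} and~\ref{prop2} to identify Algorithm~\ref{alg} with the extended Kelley method of \cite{WesterlundP:95} and then invoke their finite-convergence theorem. Your added care in checking the WP95 hypotheses (compactness from the bounds, convexity from positive-definiteness of ${\bfg \Sigma}_i$) and in dispatching the non-differentiable iterate ${\bfg x}_\nu={\bf 0}$ goes slightly beyond what the paper writes down, but it does not change the argument.
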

\begin{proof}
Propositions~\ref{prop1} and \ref{prop2} prove that both the problem definition of RC (\ref{eq.fobjF})-(\ref{eq.integralityF}) and the cutting-plane algorithm \ref{alg} are equivalent to those set out by \cite{WesterlundP:95}, for which a theoretical proof of convergence in a finite number of steps is given. Therefore, cutting-plane algorithm~\ref{alg} convergence is also theoretically  guaranteed in a finite number of steps. \qed
\end{proof}

\section{Conclusions}\label{Conclu}
Based on convergence properties for an extended Kelley's cutting plane method, this paper demonstrates that finite termination of the cutting-plane method applied to ellipsoidal uncertainty sets is theoretically guaranteed. This fact might encourage the implementation of this method in practice.

\section*{Acknowledgments}
%
Dr. M\'{\i}nguez is fully supported by the Public State Employment Service of the Ministry of Labour, Migration and Social Security of Spain. Dr. Casero-Alonso is sponsored by Ministerio de Econom\'{\i}a y Competitividad under grant contract FEDER MTM2016-80539-C2-1-R and Consejer\'{\i}a de Educaci\'on, Cultura y Deportes of Junta de Comunidades de Castilla-La Mancha under grant contract FEDER SBPLY/17/180501/000380.

\bibliographystyle{elsarticle-num}

\end{document}